\documentclass[11pt]{amsart}
\usepackage{amsmath}
\usepackage[english,  activeacute]{babel}
\usepackage[latin1]{inputenc}
\usepackage{amssymb}
\usepackage{amsthm}
\usepackage{graphics,graphicx}
\usepackage{array}
\usepackage{a4wide}
\allowdisplaybreaks
\setcounter{tocdepth}{3}
\usepackage{color, url}
\usepackage{float}

\theoremstyle{plain}
\newtheorem{theorem}{Theorem}

\newtheorem{corollary}[theorem]{Corollary}

\theoremstyle{definition}

\thispagestyle{empty}
\setlength{\parindent}{0pt}

\date{\today}

\title[Capacity statistic on compositions]{Further results for the capacity statistic distribution on compositions of $1$s and $2$s}

\begin{document}

\author[M. Shattuck]{Mark Shattuck}
\address{Department of Mathematics, University of Tennessee,
37996 Knoxville, TN}
\email{mshattuc@utk.edu}

\begin{abstract}
In this paper, we study additional aspects of the capacity distribution on the set $\mathcal{B}_n$ of compositions of $n$ consisting of $1$'s and $2$'s.  Among our results are further recurrences for this distribution as well as formulas for the total capacity and sign balance on $\mathcal{B}_n$.  We provide algebraic and combinatorial proofs of our results.  We also give combinatorial explanations of some prior results where such a proof was requested.  Finally, the joint distribution of the capacity statistic with two further parameters on $\mathcal{B}_n$ is briefly considered.
\end{abstract}
\subjclass[2010]{05A05, 05A19}
\keywords{composition, capacity, $q$-generalization, Fibonacci number, combinatorial proof}

\maketitle

\section{Introduction}

A \emph{composition} of a positive integer $n$ is a sequence of positive integers, called \emph{parts}, whose sum is $n$.  A composition $\pi$ is often represented as a vector $(\pi_1,\ldots,\pi_r)$ whose components are the parts of $\pi$, or sequentially as $\pi_1\cdots \pi_r$, where the commas between the parts are omitted.  In the latter case, a sequence of $\ell$ consecutive parts of the same size $a$ is usually denoted by $a^\ell$.  Let $\mathcal{C}_n$ denote the set of compositions of $n$ for $n \geq 1$, with $\mathcal{C}_0$ consisting of the single empty composition with zero parts.  For example, if $n=4$, then
$$\mathcal{C}_4=\{(1,1,1,1),(1,1,2),(1,2,1),(1,3),(2,1,1),(2,2),(3,1),(4)\}.$$
It is well-known that $|\mathcal{C}_n|=2^{n-1}$ for all $n \geq 1$.

Given a word $w=w_1\cdots w_m$ on the alphabet of positive integers, the \emph{bargraph} of $w$, denoted by $b(w)$, consists of $m$ columns starting from a horizontal line each comprised of unit squares, called \emph{cells}, such that the $i$-th column from the left contains $w_i$ cells for $1 \leq i \leq m$.  Here, we will represent a composition $\pi=\pi_1\cdots\pi_r \in \mathcal{C}_n$ geometrically as a bargraph containing $r$ columns and $n$ cells altogether.  We will frequently identify a composition $\pi$ with its bargraph $b(w)$, and vice versa, and treat the two interchangeably when speaking of a statistic defined on either.

By a \emph{water cell} of a composition $\pi$, we mean a unit square lying outside of $b(w)$ that would ``hold water'' by virtue of its location, assuming the usual rules of water flow.  The \emph{capacity} of $\pi$, denoted by $\text{cap}(\pi)$, will refer to the number of its water cells.  See Figure \ref{fig1} below illustrating the bargraph of $\pi\in\mathcal{C}_{30}$, which has 15 parts and a capacity of 10. The capacity statistic has been studied on a variety of discrete structures, represented as bargraphs, such as $k$-ary words \cite{BBK1}, compositions \cite{BBK2,MS}, permutations \cite{BBKS} and set partitions \cite{MS}.

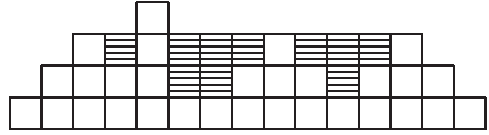
\begin{figure}[htp]
\begin{center}
\begin{picture}(200,65)
\setlength{\unitlength}{1.2pt}
\def\ccc{\put(0,0){\line(1,0){10}}\put(10,0){\line(0,1){10}}
\put(10,10){\line(-1,0){10}}\put(0,10){\line(0,-1){10}}}
\def\bbb{\multiput(0,0)(0,2){5}{\line(1,0){10}}}
\put(0,10){\multiput(0,0)(0,10){1}{\ccc}\multiput(10,0)(0,10){2}{\ccc}
\multiput(20,0)(0,10){3}{\ccc}\multiput(30,0)(0,10){2}{\ccc}
\multiput(40,0)(0,10){4}{\ccc}\multiput(50,0)(0,10){1}{\ccc}
\multiput(60,0)(0,10){1}{\ccc}\multiput(70,0)(0,10){2}{\ccc}
\multiput(80,0)(0,10){3}{\ccc}\multiput(90,0)(0,10){2}{\ccc}
\multiput(100,0)(0,10){1}{\ccc}\multiput(110,0)(0,10){3}{\ccc}
\multiput(120,0)(0,10){3}{\ccc}\multiput(130,0)(0,10){2}{\ccc}
\multiput(140,0)(0,10){1}{\ccc}
\multiput(30,20)(0,10){1}{\ccc\bbb}
\multiput(50,10)(0,10){2}{\ccc\bbb}\multiput(60,10)(0,10){2}{\ccc\bbb}
\multiput(70,20)(0,10){1}{\ccc\bbb}
\multiput(90,20)(0,10){1}{\ccc\bbb}\multiput(100,10)(0,10){2}{\ccc\bbb}
\multiput(110,20)(0,10){1}{\ccc\bbb}
}
\end{picture}
\caption{The bargraph of $\pi \in \mathcal{C}_{30}$ with cap($\pi$)=10, where the water cells are shaded.}\label{fig1}
\end{center}
\end{figure}

Let $\mathcal{B}_n$ denote the set of compositions of $n$ with parts in $\{1,2\}$ and let $f_n=f_{n-1}+f_{n-2}$ denote the $n$-th Fibonacci number, with $f_0=f_1=1$.  It is well-known (see, e.g., \cite[p.\,46, Exercise~14c]{Stan} that the cardinality of $\mathcal{B}_n$ is given by $f_n$ for all $n \geq0$.
In \cite{HT}, Hopkins and Tangboonduangjit considered the restriction of the capacity statistic to $\mathcal{B}_n$. Let $\mathcal{B}_{n,k}$ denote the subset of $\mathcal{B}_n$ whose members contain exactly $k$ water cells and let $w(n,k)=|\mathcal{B}_{n,k}|$. Illustrated in Figure \ref{fig2} below is the lone member of $\mathcal{B}_{5,1}$, which is seen to be the unique smallest composition with one water cell, and two members of $\mathcal{B}_{9,3}$, where the water cells are shaded.  Among the results in \cite{HT} are various recurrences for $w(n,k)$, expressions for $w(n,0)$ and an explicit as well as a generating function formula for the diagonal sum $\sum_{k=0}^nw(n-k,k)$.  In this paper, we study further aspects of the capacity distribution on $\mathcal{B}_n$ such as the average capacity and sign balance (i.e., the difference in cardinality of those members of $\mathcal{B}_n$ containing an even versus an odd number of water cells).  We also establish an explicit formula for $w(n,k)$ for $k \geq 1$ and provide requested combinatorial proofs of some prior results from \cite{HT}.

\begin{figure}[htp]
\begin{center}
\begin{picture}(275,65)
\setlength{\unitlength}{1.5pt}
\def\ccc{\put(0,0){\line(1,0){10}}\put(10,0){\line(0,1){10}}
\put(10,10){\line(-1,0){10}}\put(0,10){\line(0,-1){10}}}
\def\bbb{\multiput(0,0)(0,2){5}{\line(1,0){10}}}
\put(0,10){\multiput(0,0)(0,10){2}{\ccc}\multiput(10,0)(0,10){1}{\ccc}
\multiput(20,0)(0,10){2}{\ccc}\multiput(10,10)(0,10){1}{\ccc\bbb}
\multiput(50,0)(0,10){2}{\ccc}
\multiput(60,0)(0,10){1}{\ccc}\multiput(60,10)(0,10){1}{\ccc\bbb}
\multiput(70,0)(0,10){1}{\ccc}\multiput(70,10)(0,10){1}{\ccc\bbb}
\multiput(80,0)(0,10){2}{\ccc}
\multiput(90,0)(0,10){1}{\ccc}\multiput(90,10)(0,10){1}{\ccc\bbb}
\multiput(100,0)(0,10){2}{\ccc}
\multiput(130,0)(0,10){2}{\ccc}
\multiput(140,0)(0,10){2}{\ccc}
\multiput(150,0)(0,10){1}{\ccc}\multiput(150,10)(0,10){1}{\ccc\bbb}
\multiput(160,0)(0,10){1}{\ccc}\multiput(160,10)(0,10){1}{\ccc\bbb}
\multiput(170,0)(0,10){1}{\ccc}\multiput(170,10)(0,10){1}{\ccc\bbb}
\multiput(180,0)(0,10){2}{\ccc}
}
\end{picture}
\caption{The bargraphs of $212 \in\mathcal{B}_{5,1}$ and of $211212,\,221112 \in \mathcal{B}_{9,3}$.}\label{fig2}
\end{center}
\end{figure}
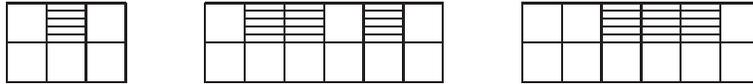

We will denote the capacity distribution on $\mathcal{B}_n$ by $b_n=b_n(y)$, i.e.,
$$b_n=\sum_{\pi \in \mathcal{B}_n}y^{\text{cap}(\pi)}=\sum_{k=0}^nw(n,k)y^k, \qquad n \geq1,$$
with $b_0=1$.  For example, if $n=6$, then
$$\mathcal{B}_6=\{1^6,1^42,1^321,1^221^2,1^22^2,121^3,1212,1221,21^4,21^22,2121,2^21^2,2^3\}$$
and $b_6=10+2y+y^2$.

The organization of this paper is as follows. In the second section, we consider various aspects of the distribution $b_n$.  In subsection 2.1, we provide combinatorial proofs of some prior formulas from \cite{HT} related to a class of restricted compositions whose cardinality coincides with the diagonal sum $\sum_{k=0}^nw(n-k,k)$.  We then determine explicit formulas for $w(n,k)$ where $k \geq 1$, as well as for $\frac{d}{dy}b_n(y)\mid_{y=1}$ and $b_n(-1)$, by use of generating functions.  In subsection 2.3, we  provide counting arguments for these formulas, where in the case of $b_n(-1)$, a certain sign-changing involution is defined on $\mathcal{B}_n$ in isolating a subset accounting for the explicit formula.  In  the final subsection, we establish both a third and a fourth order recurrence for $b_n$ by combinatorial arguments, where a creative use of the inclusion-exclusion principle is required for the latter.

In the third section, we consider the joint distribution of capacity with two other parameters on $\mathcal{B}_n$. One of these parameters records the number of $1$'s within a member of $\mathcal{B}_n$, whereas the other tracks the sum of the numbers covered by the left halves of dominoes when a member of $\mathcal{B}_n$ is represented as a square-and-domino tiling of length $n$.  An explicit formula is deduced for the generating function of this joint distribution on $\mathcal{B}_n$ and some special cases of the distribution are studied in greater detail.

\section{Properties of the capacity distribution on $\mathcal{B}_n$}

\subsection{Combinatorial proofs of prior formulas}

Let $\mathcal{D}_n$ denote the set of compositions $(a_1,\ldots,a_r)$ of a positive integer $n$ for which either $r=1,2$ or $r\geq 3$, with the parts $a_2,\ldots,a_{r-1}$ each even.  That is, any \emph{internal} part, if it exists, within a member of $\mathcal{D}_n$ is even.  Let $d(n)=|\mathcal{D}_n|$ for $n \geq 1$, with $d(0)=1$.  In \cite{HT}, it was shown $d(n)=\sum_{k=0}^nw(n-k,k)$ for $n \geq0$ and that $d(n)$ is equal to the sequence $A052955[n-1]$ in \cite{Sloane} for $n \geq 1$.  These combinatorial interpretations for A052955 in terms of restricted compositions and diagonal water cell sums are apparently new.

In \cite{HT}, it was requested to find direct combinatorial proofs of the following recurrences for $d(n)$:
\begin{equation}\label{d(n)0rec1}
d(n)=d(n-1)+2d(n-2)-2d(n-3), \qquad n \geq 4,
\end{equation}
and
\begin{equation}\label{d(n)0rec2}
d(n)=2d(n-2)+1, \qquad n \geq 3.
\end{equation}

We provide such proofs for \eqref{d(n)0rec1} and \eqref{d(n)0rec2} using the interpretation given above for $d(n)$ in terms of compositions. \medskip

\noindent \emph{Combinatorial proofs of recurrences \eqref{d(n)0rec1} and \eqref{d(n)0rec2}:} \medskip

We first show \eqref{d(n)0rec1} and let $n \geq 4$.  Upon adding 1 to the final part of an arbitrary member of $\mathcal{D}_{n-1}$, we have that there are $d(n-1)$ members of $\mathcal{D}_n$ whose final part is greater than 1.  Thus, by subtraction and since $n\geq 4$, there are $d(n-2)-d(n-3)$ members of $\mathcal{D}_{n-2}$ whose final part equals $1$, the subset of $\mathcal{D}_{n-2}$ of which we denote by $S$.  Using $S$, one can obtain the remaining unaccounted for members of $\mathcal{D}_n$, i.e., those whose last part equals 1, as follows.  Let $\rho \in S$.  Then either insert a part 2 directly prior to the terminal 1 of $\rho$, or add 2 to the penultimate part of $\rho$. Thus, each member of $S$ is seen to give rise to exactly two members of $\mathcal{D}_n$ whose final part is 1.  Further, $n \geq 4$ implies a member of $\mathcal{D}_n$ ending in 1 cannot end in $1,1$, and hence all such members of $\mathcal{D}_n$ are generated in a unique manner from those in $S$ as described.  Thus, there are $2(d(n-2)-d(n-3))$ members of $\mathcal{D}_n$ whose final part is 1, and combining with the prior case completes the proof of \eqref{d(n)0rec1}.

For \eqref{d(n)0rec2}, given $\pi\in  \mathcal{D}_{n-2}$, where $n \geq 3$, let $z$ denote the final part of $\pi$.  If $z$ is even, then either replace $z$ with $z+2$ or append a 2 to the end of $\pi$.  If $z$ is odd, then replace $z$ with either $z+2$ or $z+1,1$.  Note that these operations give rise to almost all of the members of $\mathcal{D}_n$.  If $n$ is odd, then the two-part composition $(n-2,2)$ is missed by the operations described above, whereas if $n$ is even, then $(n-1,1)$ is missed.  Further, one may verify that no other members of $\mathcal{D}_n$ are missed and that all other members of $\mathcal{D}_n$ arise uniquely by applying one of the operations described above to some $\pi \in \mathcal{D}_{n-2}$.  Since exactly two operations are applied to each member of $\mathcal{D}_{n-2}$, recurrence \eqref{d(n)0rec2} follows. \hfill \qed \medskip

It was also shown in \cite{HT} using Riordan arrays that
\begin{equation}\label{d(n)gf}
\sum_{n\geq0}d(n)x^n=\frac{1-x^2+x^3}{1-x-2x^2+2x^3},
\end{equation}
and a direct combinatorial explanation of \eqref{d(n)gf} was requested.  We complete this section by providing such an explanation.  \medskip

\noindent \emph{Combinatorial proof of formula \eqref{d(n)gf}:} \medskip

Let $\mathcal{D}_n^{(e)}$ and $\mathcal{D}_n^{(o)}$ denote the subsets of $\mathcal{D}_n$ whose members each contain at least two parts and end in an even or an odd part, respectively.  We first determine the generating function for the sequence $\{|D_n^{(e)}|\}_{n\geq 3}$.  To do so, note that if $n$ is even with $n=2m$ for some $m\geq 2$, then $\pi \in \mathcal{D}_{n}^{(e)}$ implies $\pi$ can be expressed as $\pi=p\pi'$, where $p$ is even and hence $p=2r$ with $r \in [m-1]$, and $\pi'$ is such that $\frac{1}{2}\pi'$ is any composition of the positive integer $m-r$. Thus, there are $2^{m-r-1}$ possibilities for $\pi'$ given $p$.  Similarly, if $n$ is odd with $n=2m-1$ for some $m \geq 2$, then $\pi \in \mathcal{D}_n^{(e)}$ implies $\pi=p\pi'$, where now $p=2r-1$ for some $r \in [m-1]$ and $\frac{1}{2}\pi'$ is a composition of $m-r$.  Combining the even and odd cases for $n$, we have
$$\sum_{n\geq 3}|\mathcal{D}_n^{(e)}|x^n=\frac{x}{1-x}\cdot \frac{x^2}{1-2x^2}=\frac{x^3}{(1-x)(1-2x^2)},$$
where $\frac{x}{1-x}$ accounts for the part $p$ in the decompositions described above and $\frac{x}{1-2x}$ accounts for the composition $\frac{1}{2}\pi'$ of $m-\lceil p/2\rceil$ for some $m>\lceil p/2\rceil$ (which is then doubled to yield $\pi'$). Note that the nonempty composition $\frac{1}{2}\pi'$ can be chosen independently of the part $p$, which justifies the multiplication of the two constituent factors in the generating function.

A similar approach may be applied in finding $\sum_{n\geq 2}|\mathcal{D}_n^{(o)}|x^n$.  In this case, one may first form $\pi \in \mathcal{D}_{n+1}^{(e)}$ for some $n \geq 2$ and then subtract 1 from the final part of $\pi$ to obtain a member of $\mathcal{D}_n^{(o)}$, with all members of $\mathcal{D}_n^{(o)}$ seen to arise in this way.  This has the effect of dividing the generating function for the sequence $|\mathcal{D}_n^{(e)}|$ by $x$, and hence
$$\sum_{n\geq 2}|\mathcal{D}_n^{(o)}|x^n=\frac{x^2}{(1-x)(1-2x^2)}.$$
Finally, adding the contribution of $\frac{1}{1-x}$ coming from one-part (as well as the empty) compositions, we then get
$$\sum_{n\geq0}d(n)x^n=\frac{1}{1-x}+\frac{x^2(1+x)}{(1-x)(1-2x^2)}=\frac{1-x^2+x^3}{1-x-2x^2+2x^3},$$
as desired.  \hfill \qed

\subsection{Additional results for the capacity distribution}

In this section, we find explicit formulas for $w(n,k)$ for $k\geq1$, the total capacity taken over all members of $\mathcal{B}_n$ and the sign balance of the capacity statistic on $\mathcal{B}_n$.  To do so, first recall from \cite{HT} the generating function formulas
\begin{equation}\label{unif1}
\sum_{n\geq k}w(n,k)x^n=\frac{x^{k+4}}{(1-x)^2(1-x^2)^{k+1}}, \qquad k \geq 1,
\end{equation}
and
\begin{equation}\label{unif2}
\sum_{n\geq0}w(n,0)x^n=\frac{1-x+x^3}{(1-x)^2(1-x^2)}.
\end{equation}

Define the bivariate generating function
$$\sum_{n\geq0}\sum_{k=0}^nw(n,k)x^ny^k.$$
By \eqref{unif1} and \eqref{unif2}, we have
\begin{align}
F(x,y)&=\sum_{n\geq0}w(n,0)x^n+\sum_{k\geq1}y^k\sum_{n\geq k+4}w(n,k)x^n\notag\\
&=\frac{1-x+x^3}{(1-x)^2(1-x^2)}+\sum_{k\geq1}\frac{x^{k+4}y^k}{(1-x)^2(1-x^2)^{k+1}}\notag\\
&=\frac{1-x+x^3}{(1-x)^2(1-x^2)}+\frac{x^5y}{(1-x)^2(1-x^2)(1-xy-x^2)}\notag\\
&=\frac{1-x(1+y)-x^2(1-y)+2x^3-x^4y-x^5(1-y)}{(1-x)^2(1-x^2)(1-xy-x^2)}\notag\\
&=\frac{1-x(1+y)+x^2y+x^3(1-y)}{(1-x)^2(1-xy-x^2)}.\label{Fxy}
\end{align}
Taking $y=1$ in \eqref{Fxy} gives $F(x,1)=\frac{1}{1-x-x^2}=\sum_{n\geq0}f_nx^n$, as required.  Note that \eqref{Fxy} also follows as a special case of a more general result \cite[Theorem~2]{MS} concerning  compositions whose parts lie in $[p]$ for any $p\geq1$.

We now determine an explicit formula for $w(n,k)$ for all $k \geq1$, a formula for $w(n,0)$ having already been given in \cite{HT}.

\begin{theorem}\label{th1}
If $n \geq 5$ and $1 \leq k \leq n-4$, then
\begin{equation}\label{wnkform}
w(n,k)=\sum_{r=1}^{\lfloor\frac{n-k-2}{2}\rfloor}(n-k-2r-1)\binom{k+r-1}{k}.
\end{equation}
\end{theorem}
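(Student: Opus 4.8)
The plan is to read off $w(n,k)$ directly as a coefficient in the generating function \eqref{unif1}. Since $\sum_{n\geq k}w(n,k)x^n=\frac{x^{k+4}}{(1-x)^2(1-x^2)^{k+1}}$ for $k\geq1$, we have $w(n,k)=[x^{n-k-4}]\bigl((1-x)^{-2}(1-x^2)^{-(k+1)}\bigr)$, and the hypotheses $n\geq5$, $1\leq k\leq n-4$ guarantee $n-k-4\geq0$, so this coefficient is meaningful and the extraction below is a finite sum. First I would expand the two factors as formal power series via the standard identities $(1-x)^{-2}=\sum_{j\geq0}(j+1)x^j$ and $(1-x^2)^{-(k+1)}=\sum_{m\geq0}\binom{k+m}{k}x^{2m}$.

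Next I would form the Cauchy product and collect the coefficient of $x^{n-k-4}$: pairing the term $\binom{k+m}{k}x^{2m}$ from the second series with the term $(n-k-3-2m)x^{n-k-4-2m}$ from the first, and letting $m$ run over all integers with $0\leq 2m\leq n-k-4$, gives $w(n,k)=\sum_{m=0}^{\lfloor(n-k-4)/2\rfloor}(n-k-3-2m)\binom{k+m}{k}$. I would then reindex by $r=m+1$: the summation range becomes $1\leq r\leq\lfloor(n-k-4)/2\rfloor+1=\lfloor(n-k-2)/2\rfloor$, the linear weight $n-k-3-2m$ becomes $n-k-2r-1$, and $\binom{k+m}{k}$ becomes $\binom{k+r-1}{k}$, which is precisely \eqref{wnkform}.

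I do not anticipate a serious obstacle, as the argument is routine power-series bookkeeping; the only step that genuinely requires attention is the floor arithmetic in the reindexing, together with checking that the endpoints of the resulting sum agree. As a sanity check one should verify the boundary case $k=n-4$, where the sum collapses to the single term $r=1$ and yields $w(n,n-4)=(n-k-3)\binom{k}{k}=1$, as it must. (A counting proof of \eqref{wnkform} is given separately in subsection 2.3.)
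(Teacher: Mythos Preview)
Your argument is correct and is essentially the same computation the paper carries out: both proofs extract the coefficient of $x^n$ by writing the relevant generating function as the Cauchy product of $(1-x)^{-2}$ with a series whose general term is $\binom{k+r-1}{k}x^{k+2r+2}$, then summing over $r$. The only cosmetic difference is that the paper starts from the bivariate form $\frac{x^5y}{(1-x)^2(1-x^2)(1-xy-x^2)}$ and uses the identity $\frac{x^5y}{(1-x^2)(1-xy-x^2)}=\frac{x^4}{1-xy-x^2}-\frac{x^4}{1-x^2}$ to reach that series, whereas you go straight from the univariate formula \eqref{unif1}, which makes your version slightly more direct.
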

\begin{proof}
From the derivation above of the expression for $F(x,y)$, we have
$$\sum_{n\geq5}\sum_{k=1}^{n-4}w(n,k)x^ny^k=\frac{x^5y}{(1-x)^2(1-x^2)(1-xy-x^2)}.$$
To expand the preceding generating function, first note
\begin{align*}
\frac{x^5y}{(1-x^2)(1-xy-x^2)}&=\frac{x^4}{1-xy-x^2}-\frac{x^4}{1-x^2}=\sum_{r \geq 2}x^{2r}\left(\frac{1}{(1-xy)^{r-1}}-1\right)\\
&=\sum_{r\geq1}x^{2r+2}\sum_{k\geq1}\binom{k+r-1}{r-1}(xy)^k=\sum_{k\geq1}y^k\sum_{r\geq1}\binom{k+r-1}{k}x^{k+2r+2}.
\end{align*}
Hence, we have
\begin{align*}
\frac{x^5y}{(1-x)^2(1-x^2)(1-xy-x^2)}&=\sum_{k\geq1}y^k\sum_{r\geq1}\binom{k+r-1}{k}x^{k+2r+2}\sum_{n\geq1}nx^{n-1}\\
&=\sum_{k\geq1}y^k\sum_{r\geq1}\binom{k+r-1}{k}\sum_{n\geq k+2r+2}(n-k-2r-1)x^n\\
&=\sum_{n\geq5}\sum_{k=1}^{n-4}x^ny^k\sum_{r=1}^{\lfloor\frac{n-k-2}{2}\rfloor}(n-k-2r-1)\binom{k+r-1}{k},
\end{align*}
which implies \eqref{wnkform}.
\end{proof}

\begin{theorem}\label{totcapth}
If $n\geq0$, then the sum of the capacity values of all the members of $\mathcal{B}_n$ is given by
\begin{equation}\label{totcap}
\sum_{k=1}^nkw(n,k)=\frac{(n+1)L_{n+1}-f_n}{5}-2f_{n+1}+n+2,
\end{equation}
where $L_m=f_{m}+f_{m-2}$ for $m\geq 1$ denotes the $m$-th Lucas number.
\end{theorem}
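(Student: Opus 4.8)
The plan is to extract $\sum_{k}kw(n,k)$ as the $n$-th coefficient of $\frac{\partial}{\partial y}F(x,y)\big|_{y=1}$. Indeed $F(x,y)=\sum_{n\ge0}b_n(y)x^n$ and $\frac{d}{dy}b_n(y)\big|_{y=1}=\sum_{k}kw(n,k)$, so it suffices to differentiate the closed form \eqref{Fxy}. Writing $N(x,y)$ and $D(x,y)$ for its numerator and denominator, one has $N(x,1)=(1-x)^2$, $D(x,1)=(1-x)^2(1-x-x^2)$, $N_y(x,1)=-x+x^2-x^3$, and $D_y(x,1)=-x(1-x)^2$, so the quotient rule gives
\[
\frac{\partial F}{\partial y}\Big|_{y=1}=\frac{x\bigl[(1-x)^2-(1-x+x^2)(1-x-x^2)\bigr]}{(1-x)^2(1-x-x^2)^2}=\frac{x^5}{(1-x)^2(1-x-x^2)^2},
\]
the last equality using $(1-x+x^2)(1-x-x^2)=(1-x)^2-x^4$.

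Next I would split off the part of this generating function supported at $x=1$. Since $\frac{1}{1-x}+\frac{1}{(1-x)^2}=\frac{2-x}{(1-x)^2}$, and the numerator $x^5-(2-x)(1-x-x^2)^2=2x^5-5x^3+5x-2$ is divisible by $(1-x)^2$ with quotient $2x^3+4x^2+x-2$, one obtains
\[
\sum_{n\ge0}\Bigl(\sum_{k}kw(n,k)\Bigr)x^n=\frac{1}{1-x}+\frac{1}{(1-x)^2}+\frac{2x^3+4x^2+x-2}{(1-x-x^2)^2},
\]
and the first two terms contribute $1+(n+1)=n+2$ to the coefficient of $x^n$.

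Finally, I would extract $[x^n]\frac{2x^3+4x^2+x-2}{(1-x-x^2)^2}$. Put $c_n:=[x^n](1-x-x^2)^{-2}=\sum_{j=0}^{n}f_jf_{n-j}$; the well-known Fibonacci convolution identity gives the closed form $c_n=\frac{(n+1)f_{n+1}+2(n+2)f_n}{5}$ (this also follows from $\frac{d}{dx}(1-x-x^2)^{-1}=(1+2x)(1-x-x^2)^{-2}$, and it remains valid for negative indices under the standard conventions $f_{-1}=0$, etc.). Then $[x^n]\frac{2x^3+4x^2+x-2}{(1-x-x^2)^2}=2c_{n-3}+4c_{n-2}+c_{n-1}-2c_n$, and substituting the closed form and repeatedly applying $f_{m+1}=f_m+f_{m-1}$ collapses this to $\frac{(n-10)f_n+(2n-8)f_{n-1}}{5}$. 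Combining with the $n+2$ above and then checking, via $L_{n+1}=f_{n+1}+f_{n-1}$ and $f_{n+1}=f_n+f_{n-1}$, that $\frac{(n-10)f_n+(2n-8)f_{n-1}}{5}=\frac{(n+1)L_{n+1}-f_n}{5}-2f_{n+1}$ yields \eqref{totcap}. The main obstacle is entirely the bookkeeping of this last stage — fixing the correct normalization of the convolution formula and then correctly collapsing the four-term combination of shifted Fibonacci numbers into Lucas form; because all the generating-function steps are identities of formal power series, no separate verification of small $n$ is required.
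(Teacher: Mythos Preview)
Your proof is correct. Both you and the paper begin by differentiating \eqref{Fxy} to obtain $\frac{\partial F}{\partial y}\big|_{y=1}=\frac{x^5}{(1-x)^2(1-x-x^2)^2}$, but diverge in how the coefficient is extracted. The paper views $\frac{1}{(1-x)^2(1-x-x^2)^2}$ as the convolution square of $\frac{1}{(1-x)(1-x-x^2)}$, whose $n$-th coefficient is $f_{n+2}-1$, and then expands $\sum_{j}(f_j-1)(f_{n-j+4}-1)$ directly via \cite[Identities~1 and 58]{BQ}; this lands immediately in Lucas form. You instead perform a full partial-fraction split, isolating the $(1-x)^{-1}+(1-x)^{-2}$ contribution $n+2$ and leaving a numerator over $(1-x-x^2)^2$, which you then read off as a four-term linear combination of the shifted Fibonacci convolutions $c_n=\sum_j f_jf_{n-j}$. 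Your route is the textbook one and makes the $n+2$ term transparent, at the cost of more Fibonacci bookkeeping at the end; the paper's route avoids the partial-fraction algebra and the four-term collapse, but requires the slightly slick index extension $\sum_{j=0}^{n}\to\sum_{j=-2}^{n+2}$ and an appeal to Identity~58 in exactly the form needed. Both arguments are of comparable length and neither dominates the other.
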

\begin{proof}
By \eqref{Fxy}, we have
$$\frac{\partial}{\partial y}F(x,y)\mid_{y=1}=\frac{x}{(1-x-x^2)^2}-\frac{x(1-x+x^2)}{(1-x)^2(1-x-x^2)}=\frac{x^5}{(1-x)^2(1-x-x^2)^2}.$$
Now observe
$$[x^n]\left(\frac{1}{(1-x)(1-x-x^2)}\right)=\sum_{i=0}^nf_i=f_{n+2}-1, \qquad n \geq0,$$
where in the second equality, we have used \cite[Identity~1]{BQ}.
Thus, we have
$$[x^n]\left(\frac{1}{(1-x)^2(1-x-x^2)^2}\right)=\sum_{j=0}^n(f_{j+2}-1)(f_{n-j+2}-1),$$
upon considering the convolution of the generating function $\frac{1}{(1-x)(1-x-x^2)}$ with itself.  Note that the lower and upper indices of summation in the preceding sum may be replaced by $j=-2$ and $j=n+2$, respectively.  Replacing $j$ by $j-2$ in the sum then gives
\begin{align}
[x^n]\left(\frac{1}{(1-x)^2(1-x-x^2)^2}\right)&=\sum_{j=0}^{n+4}(f_{j}-1)(f_{n-j+4}-1)\notag\\
&=\sum_{j=0}^{n+4}f_jf_{n-j+4}-2\sum_{j=0}^{n+4}f_j+\sum_{j=0}^{n+4}1\notag\\
&=\frac{(n+6)L_{n+6}-f_{n+5}}{5}-2(f_{n+6}-1)+n+5, \qquad n \geq 0, \label{n+5tot}
\end{align}
where we have used Identities 1 and 58 from \cite{BQ} to obtain \eqref{n+5tot} from the equality preceding it.  If $n \geq 5$, then we have
$$[x^n]\frac{\partial}{\partial y}F(x,y)\mid_{y=1}=[x^{n-5}]\left(\frac{1}{(1-x)^2(1-x-x^2)^2}\right)=\frac{(n+1)L_{n+1}-f_n}{5}-2f_{n+1}+n+2,$$
by \eqref{n+5tot}, which implies \eqref{totcap} for $n\geq 5$.  Since both sides of \eqref{totcap} are seen to equal zero for $0 \leq n \leq 4$, the proof of \eqref{totcap} is complete.
\end{proof}

\noindent\textbf{Remarks:} Dividing the expression in \eqref{totcap} by $f_n$ yields the average capacity of all the members of $\mathcal{B}_n$.  Since $f_n\sim\frac{1}{\sqrt{5}}\left(\frac{1+\sqrt{5}}{2}\right)^{n+1}$ for large $n$, one obtains that this average is asymptotically equal to $\frac{n}{\sqrt{5}}$.  This result may be interpreted probabilistically in terms of tilings as follows.  Let $\mathcal{L}_n$ denote the set of linear square-and-domino tilings of length $n$; see, e.g., \cite[Chapter~1]{BQ}.  Then the preceding implies that if a member $\lambda \in \mathcal{L}_n$ and an element $x \in [n]$ are chosen at random where $n$ is large, then the probability that $x$ is covered by a square in $\lambda$ such that at least one domino occurs somewhere both to the left and to the right of $x$ is approximately $\frac{1}{\sqrt{5}}\approx0.447$. \medskip

Let $\mathcal{B}_n^{(e)}$ and $\mathcal{B}_n^{(o)}$ denote the subsets of $\mathcal{B}_n$ consisting of those members whose capacity is even or odd, respectively.   We have the following sign balance result for the capacity statistic on $\mathcal{B}_n$.

\begin{theorem}\label{capbalance}
If $n \geq0$, then
\begin{equation}\label{capbalancee1}
|\mathcal{B}_n^{(e)}|-|\mathcal{B}_n^{(o)}|=\sum_{k=0}^n(-1)^kw(n,k)=2n-4+(-1)^nf_{n-6},
\end{equation}
where $f_{-m}$ for $m\geq2$ is defined as $(-1)^mf_{m-2}$, with $f_{-1}=0$.
\end{theorem}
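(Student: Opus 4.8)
The plan is to extract $b_n(-1)=\sum_{k=0}^n(-1)^kw(n,k)$ as the coefficient of $x^n$ in $F(x,-1)$, paralleling the treatment of $\frac{\partial}{\partial y}F(x,y)\mid_{y=1}$ in the proof of Theorem~\ref{totcapth}. The first equality in \eqref{capbalancee1} is immediate, since $(-1)^k$ equals $1$ or $-1$ according as $k$ is even or odd, so all of the content lies in the closed form on the right. Substituting $y=-1$ into \eqref{Fxy} and simplifying yields
\[
F(x,-1)=\frac{1-x^2+2x^3}{(1-x)^2(1+x-x^2)}.
\]

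I would then decompose this rational function by partial fractions in the form $\frac{P(x)}{(1-x)^2}+\frac{Q(x)}{1+x-x^2}$ with $\deg P,\deg Q\leq1$; a short computation gives $P(x)=6x-4$ and $Q(x)=8x+5$. Since $[x^n]\frac{1}{(1-x)^2}=n+1$, we get $[x^n]\frac{6x-4}{(1-x)^2}=6n-4(n+1)=2n-4$. For the other piece, the key observation is that replacing $x$ by $-x$ in the identity $\frac{1}{1-x-x^2}=\sum_{n\geq0}f_nx^n$ (which is $F(x,1)$) gives $\frac{1}{1+x-x^2}=\sum_{n\geq0}(-1)^nf_nx^n$, whence $[x^n]\frac{8x+5}{1+x-x^2}=(-1)^n(5f_n-8f_{n-1})$. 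Adding the two contributions yields $b_n(-1)=2n-4+(-1)^n(5f_n-8f_{n-1})$ for every $n\geq0$, where for $n=0$ the value $f_{-1}=0$ is used.

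It then remains only to recognize $5f_n-8f_{n-1}$ as $f_{n-6}$. The stated convention $f_{-m}=(-1)^mf_{m-2}$ (with $f_{-1}=0$) is precisely the one under which the recurrence $f_m=f_{m-1}+f_{m-2}$ persists for all integers $m$; iterating it downward expresses $f_{n-6}$ as an integer combination of $f_n$ and $f_{n-1}$, and one finds $f_{n-6}=5f_n-8f_{n-1}$, valid for all $n\geq0$. Combining with the previous paragraph gives \eqref{capbalancee1}. I do not anticipate a real obstacle; the only points requiring care are the bookkeeping of the partial-fraction constants and the consistent handling of the negative-index convention in small cases, which can be sanity-checked against, e.g., $b_6(-1)=10-2+1=9=2\cdot6-4+f_0$ and $b_5(-1)=7-1=6=2\cdot5-4+(-1)f_{-1}$.
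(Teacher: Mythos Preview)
Your proof is correct and follows essentially the same generating-function route as the paper: substitute $y=-1$ into \eqref{Fxy}, perform a partial-fraction decomposition, and extract coefficients. The only cosmetic difference is that the paper decomposes $\frac{1}{(1-x)^2(1+x-x^2)}$ first and then applies the numerator $1-x^2+2x^3$ as shifts (arriving at $f_{n-2}-f_{n-4}-2f_{n-5}=f_{n-6}$), whereas you decompose the full rational function at once and reduce $5f_n-8f_{n-1}$ to $f_{n-6}$; both computations are equally short and lead to the same conclusion.
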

\begin{proof}
The formula is readily verified for $0 \leq n \leq 2$. Taking $y=-1$ in \eqref{Fxy} gives $F(x,-1)=\frac{1-x^2+2x^3}{(1-x)^2(1+x-x^2)}$.  Note
$$\frac{1}{(1-x)^2(1+x-x^2)}=\frac{1}{(1-x)^2}-\frac{x}{1-x}+\frac{x^2}{1+x-x^2},$$
and hence
$$[x^n]\left(\frac{1}{(1-x)^2(1+x-x^2)}\right)=n+(-1)^nf_{n-2}, \qquad n \geq0.$$
This implies for $n \geq 3$,
\begin{align*}
[x^n]F(x,-1)&=n+(-1)^nf_{n-2}-(n-2+(-1)^{n-2}f_{n-4})+2(n-3+(-1)^{n-3}f_{n-5})\\
&=2n-4+(-1)^n(f_{n-2}-f_{n-4}-2f_{n-5})\\
&=2n-4+(-1)^nf_{n-6},
\end{align*}
as desired.
\end{proof}

As a consequence of the preceding results, one obtains the following binomial coefficient identities.

\begin{corollary}\label{binomcor}
If $n\geq 5$, then
\begin{align}
\sum_{r=1}^m\sum_{k=2r+2}^{n-1}(n-k)\binom{k-r-2}{r-1}&=f_n-1-\lfloor n^2/4 \rfloor, \label{binomcore1}\\
\sum_{r=1}^m\sum_{k=2r+2}^{n-1}(n-k)(k-r-2)\binom{k-r-3}{r-1}&=\frac{(n+1)L_{n+1}-f_n}{5}-2f_{n+1}+n+2, \label{binomcore2}\\
\sum_{r=1}^m\sum_{k=2r+2}^{n-1}(-1)^{k-1}(n-k)\binom{k-r-2}{r-1}&=2n-5-\lfloor n^2/4 \rfloor+(-1)^nf_{n-6},\label{binomcore3}
\end{align}
where $m=\lfloor(n-3)/2\rfloor$.
\end{corollary}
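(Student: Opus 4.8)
The plan is to identify the three left-hand sides, after a single change of summation index in the formula \eqref{wnkform} of Theorem~\ref{th1}, with the sums $\sum_{k\geq1}w(n,k)$, $\sum_{k\geq1}kw(n,k)$ and $\sum_{k\geq1}(-1)^kw(n,k)$, and then to evaluate these using $|\mathcal{B}_n|=f_n$, Theorems~\ref{totcapth} and~\ref{capbalance}, together with the formula $w(n,0)=1+\lfloor n^2/4\rfloor$ for the number of capacity-free members of $\mathcal{B}_n$, which follows from \cite{HT} or by expanding \eqref{unif2}.

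First I would interchange the order of summation in \eqref{wnkform}, summing over $r\geq1$ first and then over $1\leq k\leq n-2r-2$; the latter range is nonempty precisely when $1\leq r\leq m=\lfloor(n-3)/2\rfloor$, and since \eqref{wnkform} yields an empty sum when $k>n-4$ (equivalently $w(n,k)=0$ there), nothing is lost by summing over all $k\geq1$. I would then substitute $j=k+2r+1$: the range $1\leq k\leq n-2r-2$ becomes $2r+2\leq j\leq n-1$, the factor $n-k-2r-1$ becomes $n-j$, and $\binom{k+r-1}{k}=\binom{j-r-2}{j-2r-1}=\binom{j-r-2}{r-1}$, so (after renaming $j$ as $k$) the resulting double sum is exactly the left-hand side of \eqref{binomcore1}. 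For \eqref{binomcore3} one observes in addition that $j-2r-1$ has the same parity as $j-1$, so the sign $(-1)^k$ becomes $(-1)^{j-1}$. For \eqref{binomcore2} the extra factor of $k$ is handled by the elementary identity $k\binom{k+r-1}{k}=(k+r-1)\binom{k+r-2}{r-1}$; since $k+r-1=j-r-2$, the summand becomes $(n-j)(j-r-2)\binom{j-r-3}{r-1}$, matching the left-hand side of \eqref{binomcore2}.

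It then remains to evaluate the right-hand sides. From $\sum_{k=0}^nw(n,k)=|\mathcal{B}_n|=f_n$ we obtain $\sum_{k\geq1}w(n,k)=f_n-w(n,0)=f_n-1-\lfloor n^2/4\rfloor$, which is \eqref{binomcore1}. Theorem~\ref{totcapth} gives $\sum_{k\geq1}kw(n,k)$ directly, which is \eqref{binomcore2}. And Theorem~\ref{capbalance} gives $\sum_{k=0}^n(-1)^kw(n,k)=2n-4+(-1)^nf_{n-6}$, so subtracting $w(n,0)=1+\lfloor n^2/4\rfloor$ yields \eqref{binomcore3}.

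The argument is mostly bookkeeping, and the one step deserving care is the index substitution: verifying that the summation limits $j=2r+2$ and $j=n-1$ emerge correctly, and that the two binomial coefficients $\binom{j-r-2}{r-1}$ (for \eqref{binomcore1} and \eqref{binomcore3}) and $\binom{j-r-3}{r-1}$ (for \eqref{binomcore2}) are produced correctly from $\binom{k+r-1}{k}$ before and after the factor $k$ is absorbed. I would also check the base case $n=5$, where $m=1$ and each identity collapses to a single term, as a sanity check.
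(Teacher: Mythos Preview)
Your proposal is correct and follows essentially the same approach as the paper: both arguments substitute the formula \eqref{wnkform} into the three sums $\sum_{k\geq1}w(n,k)$, $\sum_{k\geq1}kw(n,k)$ and $\sum_{k\geq1}(-1)^kw(n,k)$, interchange the order of summation, perform the index shift (your $j=k+2r+1$ is the paper's ``replacing $k$ with $k-2r-1$''), invoke the same binomial identity for the middle sum, and evaluate the right-hand sides via Theorems~\ref{totcapth} and~\ref{capbalance} together with $w(n,0)=1+\lfloor n^2/4\rfloor$. Your write-up is in fact a bit more explicit about the bookkeeping (summation limits, parity check for the sign) than the paper's.
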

\begin{proof}
First note $\sum_{k=0}^nw(n,k)=f_n$, $\sum_{k=1}^nkw(n,k)=\text{tot}_n(\text{cap})$ and $\sum_{k=0}^n(-1)^kw(n,k)=|\mathcal{B}_n^{(e)}|-|\mathcal{B}_n^{(o)}|$ for $n \geq0$, where $\text{tot}_n(\text{cap})$ denotes the sum of the capacity values of all the members of $\mathcal{B}_n$.
By \eqref{wnkform}, \eqref{totcap} and \eqref{capbalancee1}, we then obtain the following identities for $n \geq 5$:
\begin{align*}
\sum_{k=1}^{n-4}\sum_{r=1}^{p}(n-k-2r-1)\binom{k+r-1}{k}&=f_n-w(n,0),\\
\sum_{k=1}^{n-4}\sum_{r=1}^{p}k(n-k-2r-1)\binom{k+r-1}{k}&=\frac{(n+1)L_{n+1}-f_n}{5}-2f_{n+1}+n+2,\\
\sum_{k=1}^{n-4}\sum_{r=1}^{p}(-1)^k(n-k-2r-1)\binom{k+r-1}{k}&=2n-4+(-1)^nf_{n-6}-w(n,0),
\end{align*}
where $p=\lfloor (n-k-2)/2 \rfloor$ and $w(n,0)=1+\lfloor n^2/4 \rfloor$.  Interchanging summation, replacing the index $k$ with $k-2r-1$ and noting $(k-2r-1)\binom{k-r-2}{r-1}=(k-r-2)\binom{k-r-3}{r-1}$ in the second resulting identity yields \eqref{binomcore1}--\eqref{binomcore3}.
\end{proof}

\subsection{Combinatorial proofs}

In this section, we provide combinatorial explanations of the formulas in Theorems \ref{th1}--\ref{capbalance}, which were found by use of the generating function \eqref{Fxy}. \medskip

\noindent\emph{Proof of Theorem \ref{th1}:}\medskip

Note that $\pi \in \mathcal{B}_{n,k}$ for some $n \geq 5$ and $k \in [n-4]$ implies $\pi$ can be decomposed as
\begin{equation}\label{bnkform}
\pi=1^x2^{a_0}1\cdots 2^{a_{k-1}}12^{a_k}1^y,
\end{equation}
where $a_0,a_{k}\geq 1$ and $x,y,a_1,\ldots,a_{k-1} \geq0$ such that $x+y+2\sum_{i=0}^ka_i=n-k$. We first enumerate members of $\mathcal{B}_{n,k}$  of the form \eqref{bnkform}, where $\sum_{i=0}^ka_i=r+1$ for some $r \geq 1$.  Note that $x,y\geq0$ implies $2(r+1)\leq n-k$, i.e., $r \leq \lfloor(n-k-2)/2\rfloor$.  Then $a_0,a_{k+1}\geq1$ implies there are $\binom{k+r-1}{k}$ possibilities for $(a_0,\ldots,a_{k})$ for each $r$, by \cite[p.\,15]{Stan}.  Then $x+y=n-k-2r-2$ with $x,y \geq0$ yields $n-k-2r-1$ possibilities for $x$ and $y$ independent of the choice of the $a_i$.  Thus, there are $(n-k-2r-1)\binom{k+r-1}{k}$ members of $\mathcal{B}_{n,k}$ such that $\sum_{i=0}^ka_i=r+1$ for each $r$ and summing over all possible $r$ yields \eqref{wnkform}.
\hfill \qed \medskip

\noindent\emph{Proof of Theorem \ref{totcapth}:}\medskip

Formula \eqref{totcap} is easily seen to hold for $0 \leq n \leq 2$, so we may assume $n \geq 3$.  Let $\mathcal{B}_n'$ denote the set of marked members of $\mathcal{B}_n$ wherein a single part 1 corresponding to a water cell, if it exists, is marked.  Equivalently, we determine the cardinality of $\mathcal{B}_n'$.  Let $\mathcal{B}_n^*$ denote the set of marked members of $\mathcal{B}_n$ wherein any part $1$ may be marked.  Note that there are $f_if_{n-i-1}$ members $\pi \in \mathcal{B}_n^*$ of the form $\pi=\pi'\underline{1}\pi''$ such that $\pi' \in \mathcal{B}_i$, $\pi'' \in \mathcal{B}_{n-i-1}$ and the marked $1$ is underlined.  Summing over all $0 \leq i \leq n-1$ implies
$$|\mathcal{B}_n^*|=\sum_{i=0}^{n-1}f_if_{n-i-1}=\frac{(n+1)L_{n+1}-f_n}{5},$$
where in the second equality, we have used \cite[Identity~58]{BQ}, which was provided a combinatorial proof by the authors in terms of linear and circular tilings.

To complete our enumeration of $\mathcal{B}_n'$, we subtract from $\mathcal{B}_n^*$ the cardinality of those members in which the marked $1$ occurs as part of an initial or terminal run of $1$'s, and hence does not correspond to a water cell.  Note that there are $f_{n-\ell}-1$ members of $\mathcal{B}_n^*$ of the form $1^{\ell-1}\underline{1}\rho'$ for some $1 \leq \ell \leq n-2$, where the marked $1$ is underlined and $\rho'$ contains at least one $2$. Considering all $\ell$ gives $\sum_{\ell=1}^{n-2}(f_{n-\ell}-1)=f_{n+1}-n-1$ members of $\mathcal{B}_n^*$ (not all $1$'s) in which the marked $1$ occurs as part of an initial run of $1$'s, where we have made use of \cite[Identity~1]{BQ}, which may be readily explained bijectively.  By symmetry, there are the same number of members of $\mathcal{B}_n^*$ in which the marked $1$ occurs as part of a terminal run (preceded by a $2$).  Finally, there are $n$ members of $\mathcal{B}_n^*$ corresponding to the all $1$'s composition.  Combining the prior cases, we have
$$|\mathcal{B}_n^*-\mathcal{B}_n'|=2(f_{n+1}-n-1)+n=2f_{n+1}-n-2.$$
Subtracting this expression from the one found above for $|\mathcal{B}_n^*|$ gives the desired formula for $|\mathcal{B}_n'|$ and completes the proof. \hfill \qed \medskip

\noindent\emph{Proof of Theorem \ref{capbalance}:}\medskip

Let $\mathcal{K}_n$ denote the subset of $\mathcal{B}_n$ whose members contain at least two parts $2$ such that at least one part occurs between the first and last $2$'s.  Note that $\mathcal{K}_n=\varnothing$ if $0 \leq n \leq 4$ and formula \eqref{capbalancee1} is readily verified in these cases.  So assume $n \geq 5$ and we seek an involution on $\mathcal{K}_n$. Note that $\pi \in \mathcal{K}_n$ for $n \geq 5$ implies it may be decomposed as
\begin{equation}\label{K-ndec}
\pi=1^x2\alpha21^y, \quad x,y \geq0.
\end{equation}
Then $\alpha \in \mathcal{B}_{n-x-y-4}$ nonempty implies $x+y \leq n-5$. Define the sign of $\pi$ by $\text{sgn}(\pi)=(-1)^{\text{cap}(\pi)}$.  Note that $\text{cap}(\pi)=n-x-y-2\mu(\pi)$, where $\mu(\pi)$ denotes the number of $2$'s of $\pi$, and hence $\text{sgn}(\pi)=(-1)^{n-x-y}$.

We define a preliminary involution on $\mathcal{K}_n$ as follows.  If $x \geq 1$ and $\alpha=\alpha'1$ in \eqref{K-ndec}, then replace $\pi$ with the composition $1^{x-1}2(\alpha'2)21^y$, whereas if $x \geq 0$ and $x=\alpha'2$, then replace $\pi$ with $1^{x+1}2(\alpha'1)21^y$.  This pair of operations defines a sign-changing involution on $\mathcal{K}_n$ whose set of survivors, which we'll denote by $\mathcal{K}_n'$, consists of those $\pi$ expressible as $\pi=2(\alpha'1)21^y$, where $\alpha'$ is possibly empty.  If $n=5$, then $\mathcal{K}_n'$ contains a single member with negative sign, and \eqref{capbalancee1} is seen to hold when $n=5$ since $|\mathcal{B}_5-\mathcal{K}_5|=7$.

So assume $n \geq 6$ and considering the final part of the section $\alpha'$ within a member of $\mathcal{K}_n'$ leads to the pairings
$$2(\alpha''21)21^y\leftrightarrow 2(\alpha''1^2)21^{y+1}, \qquad y \geq0.$$
Thus, each member of $\mathcal{K}_n'$ is paired with another of opposite sign except for $\rho=2121^{n-5}$ or $\pi$ of the form $\pi=2(\alpha''1^2)2$, where $\alpha'' \in \mathcal{B}_{n-6}$. Note that $\rho$ has negative sign, whereas each $\pi$ of the stated form has sign $(-1)^n$.  Thus, the sum of the signs of all members of $\mathcal{K}_n'$, and hence also of $\mathcal{K}_n$, is given by $-1+(-1)^nf_{n-6}$.

To this, we add the contribution of members of $\mathcal{B}_n-\mathcal{K}_n$, which consists of compositions of $1$'s and $2$'s containing at most one $2$ or exactly two $2$'s with no part between them.  Since $\mathcal{B}_n-\mathcal{K}_n$ is a subset of $\mathcal{B}_{n,0}$, each of its members has positive sign, with $|\mathcal{B}_n-\mathcal{K}_n|=2n-3$, as there are $n$ compositions of $n$ containing at most one $2$ and $n-3$ of the form $1^x2^21^y$ for some $x,y\geq0$.  Adding $2n-3$ to the contribution towards the sign balance found above for $\mathcal{K}_n$ yields $2n-4+(-1)^nf_{n-6}$ for all $n \geq 6$, as desired.  \hfill \qed \medskip

\subsection{Recurrences for the capacity distribution}

Recall $b_n=b_n(y)$ for $n \geq1$ be given by $b_n=\sum_{\pi \in \mathcal{B}_n}y^{\text{cap}(\pi)}$, with $b_0=1$.  There are the following recurrence formulas for $b_n$.

\begin{theorem}\label{bnrec}
We have
\begin{equation}\label{bnrece1}
b_n=(1+y)b_{n-1}+(1-y)b_{n-2}-b_{n-3}+1-y,\qquad n \geq 3,
\end{equation}
and
\begin{equation}\label{bnrece2}
b_n=(2+y)b_{n-1}-2yb_{n-2}-(2-y)b_{n-3}+b_{n-4},\qquad n \geq 4,
\end{equation}
with $b_0=b_1=1$, $b_2=2$ and $b_3=3$.
\end{theorem}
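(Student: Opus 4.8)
The plan is to derive both recurrences from the bivariate generating function $F(x,y)$ in \eqref{Fxy}, since the denominator $(1-x)^2(1-xy-x^2)$ immediately encodes a linear recurrence with polynomial-in-$y$ coefficients, up to an inhomogeneous term coming from the numerator. First I would clear denominators in \eqref{Fxy}: writing $F(x,y)=\sum_{n\geq0}b_n(y)x^n$, multiply through by $(1-x)^2(1-xy-x^2)$ to obtain $\left(\sum_{n\geq0}b_nx^n\right)(1-x)^2(1-xy-x^2)=1-x(1+y)+x^2y+x^3(1-y)$. Expanding $(1-x)^2(1-xy-x^2)=1-(2+y)x+(1+2y)x^2+(2-y)x^3-x^4$, I would then read off the coefficient of $x^n$ on both sides. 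For $n\geq4$ the right-hand side contributes nothing, and matching coefficients gives exactly \eqref{bnrece2}: $b_n=(2+y)b_{n-1}-(1+2y)b_{n-2}-(2-y)b_{n-3}+b_{n-4}$; I should double-check the sign on the $b_{n-2}$ term against the paper's stated $-2yb_{n-2}$, which suggests the intended factorization groups a $(1-x-x^2)$ or similar — I would recompute $(1-x)^2(1-xy-x^2)$ carefully, as the precise polynomial coefficients are the one genuinely error-prone step. The low-order initial values $b_0=b_1=1$, $b_2=2$, $b_3=3$ are then forced by matching coefficients of $x^0,\dots,x^3$ against the numerator (and are independently checkable from the $n=6$ example already worked in the introduction).

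For the third-order recurrence \eqref{bnrece1}, the cleanest route is to eliminate one factor of $(1-x)$ from the denominator of $F(x,y)$. Since $(1-x)F(x,y)$ has denominator $(1-x)(1-xy-x^2)=1-(1+y)x-(1-y)x^2+x^3$, and $(1-x)F(x,y)=\sum_{n\geq0}(b_n-b_{n-1})x^n$ (with $b_{-1}:=0$), multiplying by that cubic denominator and extracting the coefficient of $x^n$ for $n$ large gives a three-term recurrence for the differences $b_n-b_{n-1}$, which telescopes back to a recurrence for $b_n$ itself with an inhomogeneous constant term. Concretely, I expect $(b_n-b_{n-1})=(1+y)(b_{n-1}-b_{n-2})+(1-y)(b_{n-2}-b_{n-3})-(b_{n-3}-b_{n-4})+[\text{const}]$, and after summing/telescoping the constant resolves to $1-y$, matching \eqref{bnrece1}. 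Alternatively, one can verify directly that the right-hand side of \eqref{bnrece1} satisfies the same recurrence and initial data, but the generating-function elimination is more transparent. The range $n\geq3$ for \eqref{bnrece1} and $n\geq4$ for \eqref{bnrece2} will come out of where the numerator stops contributing after the relevant clearing of denominators.

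The main obstacle is purely bookkeeping: getting the polynomial $(1-x)^2(1-xy-x^2)$ and the reduced $(1-x)(1-xy-x^2)$ expanded with correct signs, and correctly tracking the finitely many numerator contributions that produce the constant/affine inhomogeneous terms $1-y$ in \eqref{bnrece1}. There is no conceptual difficulty once $F(x,y)$ from \eqref{Fxy} is in hand. As a sanity check at the end, I would set $y=1$: \eqref{bnrece2} should collapse to $b_n=3b_{n-1}-2b_{n-2}-b_{n-3}+b_{n-4}$ and \eqref{bnrece1} to $b_n=2b_{n-1}-b_{n-3}$, both of which must be satisfied by the Fibonacci numbers $f_n$ (since $b_n(1)=f_n$), and I would confirm this against $f_0,\dots,f_6=1,1,2,3,5,8,13$. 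The paper advertises combinatorial proofs of these recurrences as well; if a combinatorial argument is wanted for \eqref{bnrece1}, the decomposition \eqref{bnkform} of $\mathcal{B}_{n,k}$ suggests appending parts $1$ or $2$ to the front of a shorter composition and tracking the induced change in capacity, with the inclusion–exclusion correction handling the boundary cases where a newly created or destroyed water cell shifts $\mathrm{cap}$ — but the generating-function proof above is self-contained and I would present that.
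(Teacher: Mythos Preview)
Your generating-function approach is correct and genuinely different from the paper's proof. The paper argues \eqref{bnrece1} combinatorially: it partitions $\mathcal{B}_n$ by the last one or two parts (ending in $1$; ending in $1,2$; ending in $2,2$), tracks how each appending operation changes capacity, and assembles the pieces. It then obtains \eqref{bnrece2} purely algebraically by writing \eqref{bnrece1} at $n$ and at $n-1$ and subtracting to kill the inhomogeneous $1-y$. So the paper's logical order is the reverse of yours: combinatorics first, then the homogeneous fourth-order recurrence as a corollary; you go straight from $F(x,y)$ to both. The paper itself remarks (just after the proof) that \eqref{bnrece2} also follows from \eqref{Fxy}, which is exactly your route, and then supplies a separate inclusion--exclusion combinatorial proof of \eqref{bnrece2}.

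Two minor cleanups on your side. First, your expansion $(1-x)^2(1-xy-x^2)=1-(2+y)x+(1+2y)x^2+(2-y)x^3-x^4$ has a slip in the $x^2$ coefficient: it is $2y$, not $1+2y$ (the $-x^2$ from the first factor and the $+x^2$ from $(1-x)^2$ cancel), which is precisely what gives the $-2yb_{n-2}$ you flagged. Second, for \eqref{bnrece1} you can avoid the telescoping detour: multiply $F(x,y)$ directly by $(1-x)(1-xy-x^2)=1-(1+y)x-(1-y)x^2+x^3$ to get $\dfrac{1-(1+y)x+yx^2+(1-y)x^3}{1-x}$, whose power-series coefficients are eventually constant and equal to $1-(1+y)+y+(1-y)=1-y$ for $n\geq3$; reading off the coefficient of $x^n$ then gives \eqref{bnrece1} immediately. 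What the paper's combinatorial argument buys is an explanation of where the $1-y$ comes from (it is the correction for the single composition $1^{n-2}2$ when inserting a $1$ before the terminal $2$); what your argument buys is that both recurrences drop out mechanically once \eqref{Fxy} is known.
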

\begin{proof}
We first show \eqref{bnrece1}, where the initial values for $0 \leq n \leq 2$ are clear.  Let $n \geq 3$ and note that appending a 1 to an arbitrary member of $\mathcal{B}_{n-1}$ does not change the capacity, whence the weight of all members of $\mathcal{B}_n$ ending in 1 is given by $b_{n-1}$.  On the other hand, inserting a 1 directly prior to the final 2 within a member of $\mathcal{B}_{n-1}$ ending in 2, except for $1^{n-3}2$, is seen to increase the capacity by one.  Thus, by subtraction, the contribution towards $b_n$ of the members of $\mathcal{B}_{n}$ ending in $1,2$ is given by $y(b_{n-1}-b_{n-2}-1)+1$.  Note that the $+1$ term accounts for the composition $1^{n-2}2$ of capacity zero, which does not arise by inserting 1 as described.  Finally, by subtraction, the weight of all members of $\mathcal{B}_{n-2}$ ending in 2 is given by $b_{n-2}-b_{n-3}$ and appending a 2 to such members does not change the capacity.  Thus, the contribution towards $b_n$ of all the members of $\mathcal{B}_n$ ending in $2,2$ is given by $b_{n-2}-b_{n-3}$ and combining with the prior cases implies \eqref{bnrece1}.

Replacing $n$ with $n-1$ in $b_n-(1+y)b_{n-1}-(1-y)b_{n-2}+b_{n-3}=1-y$ for $n \geq 4$, and equating expressions for $1-y$, leads to \eqref{bnrece2}.
\end{proof}

Note that recurrence \eqref{bnrece2} follows from the generating function formula \eqref{Fxy} above as well.  It is also possible to provide a counting argument for \eqref{bnrece2} as follows.  \medskip

\noindent\emph{Combinatorial proof of recurrence \eqref{bnrece2}:}\medskip

Let $n \geq 4$, and first observe $\mathcal{B}_n-\{1^{n-2}2\}$ is a (non-disjoint) union of the following three sets:
\begin{align*}
U=\{&\pi \in \mathcal{B}_n: \pi \text{ ends in } 1\},\\
V=\{&\pi \in \mathcal{B}_n: \pi \text{ contains at least two 2's and at least one 1 occurs between the rightmost}\\
&\text{two 2's}\},\\
W=\{&\pi \in \mathcal{B}_n: \pi \text{ contains at least two 2's, with the rightmost two 2's adjacent}\}.
\end{align*}
Let $|S|$ denote here $\sum_{\pi \in S}y^{\text{cap}(\pi)}$ for a subset $S$ of $\mathcal{B}_n$; i.e., $|S|$ gives the cardinality of the set of colored members of $S$  wherein each water cell receives one of $y$ colors in the case when $y$ is a fixed positive integer.  Applying the principle of inclusion-exclusion, and noting $V\cap W =\varnothing$, we have
\begin{equation}\label{incexcl}
b_n-1=|U|+|V|+|W|-|U\cap V|-|U\cap W|.
\end{equation}

To complete the proof of \eqref{bnrece2}, we seek expressions for the quantities on the right-hand side of \eqref{incexcl} in terms of $b_n$. Clearly, we have $|U|=b_{n-1}$.  Also, members of $V$ may be formed in a unique manner from members of $\mathcal{B}_{n-1}$ containing at least two $2$'s by inserting a single 1 directly prior to the rightmost 2.  This is seen to increase the capacity by one and hence $|V|=y(b_{n-1}-n+1)$, upon excluding the $n-1$ members of $\mathcal{B}_{n-1}$ that contain at most one $2$. To determine $|W|$, we consider the auxiliary set $X$ given by $X=\{\pi \in \mathcal{B}_n: \pi \text{ ends in 2,\,1}\}$. To form members of $W$ or $X$, we start with a composition $\pi \in \mathcal{B}_{n-1}-\{1^{n-1}\}$ and consider the final part of $\pi$.  If $\pi$ ends in 1, then we change the 1 directly following the rightmost 2 in $\pi$ to a 2, which is seen to yield all members of $W$.  If $\pi$ ends in a 2, then we append 1 to $\pi$, which yields the members of $X$.  Since the capacity is unchanged by the operation in either case, we have $b_{n-1}-1=|W|+|X|$.

To determine $|X|$, we consider the antepenultimate part of $\pi \in X$.  If $\pi=\pi'2^21$, then $\pi$ may be obtained by appending $2,1$ to a member of $\mathcal{B}_{n-3}$ ending in 2, of which there are $b_{n-3}-b_{n-4}$ possibilities, by subtraction.  If $\pi=\pi'121$ and $\pi \neq 1^{n-3}21$, then $\pi$ may be obtained from $\rho \in \mathcal{B}_{n-2}$ ending in 2, and not equal $1^{n-4}2$, by inserting a 1 both directly before and after the terminal 2 of $\rho$.  Since $\rho \neq 1^{n-4}2$, this operation always increases the capacity by one and hence there are $y(b_{n-2}-b_{n-3}-1)$ possibilities for such $\pi$, upon excluding the members of $\mathcal{B}_{n-2}$ ending in 1 as well as the single composition $1^{n-4}2$ from consideration for $\rho$.  Adding to this the contribution of $1^{n-3}21$, which has capacity zero, implies members of $X$ with antepenultimate part 1 have weight $y(b_{n-2}-b_{n-3})+1-y$.  Combining with the previous case implies
$|X|=yb_{n-2}+(1-y)b_{n-3}-b_{n-4}+1-y$, and hence
$$|W|=b_{n-1}-1-|X|=b_{n-1}-yb_{n-2}-(1-y)b_{n-3}+b_{n-4}-2+y.$$

To find $|U\cap V|$, first note that $\pi \in U \cap V$ implies $\pi$ is expressible as $\pi=\pi'21^x21^y$ for some $x,y\geq 1$.  Thus, members of $U \cap V$ may be obtained in a unique manner from compositions in $\mathcal{B}_{n-2}$ containing at least two $2$'s by inserting a 1 both directly before and after the rightmost 2.  This implies $|U\cap V|=y(b_{n-2}-n+2)$, upon excluding the $n-2$ members of $\mathcal{B}_{n-2}$ containing at most one $2$.  Next, observe $\pi \in U \cap W$ implies it is of the form $\pi=\pi'2^21^x$ for some $x\geq 1$.  Hence, $\pi$ may be formed from $\rho \in \mathcal{B}_{n-3}-\{1^{n-3}\}$ by inserting $2,1$ directly after the rightmost 2 of $\rho$, which leaves the capacity unchanged.  Therefore, we have $|U \cap W|=b_{n-3}-1$.  Recurrence \eqref{bnrece2} now follows from \eqref{incexcl}, upon substituting the expressions found above for the weights of the sets on the right-hand side.  \hfill \qed

\section{A further polynomial generalization}

In this section, we consider a polynomial generalization of $b_n$ in terms of a pair of statistics on $\mathcal{B}_n$ as follows.  Let $\tau(\pi)$ denote the number of $1$'s in $\pi \in \mathcal{B}_n$.  For our second statistic, let $1 \leq i_1<\cdots<i_r \leq m$ denote the set of indices $j \in [m]$ such that $\pi_j=2$ within $\pi=\pi_1\cdots \pi_m \in \mathcal{B}_n$, where $\pi$ contains $m$ parts for some $m \in [n]$.  Define $$\sigma (\pi)=r+\sum_{j=1}^r\sum_{k=1}^{i_j-1}\pi_k.$$

\noindent \textbf{Remarks:} We have that $\sigma(\pi)$ then gives the total of the number of parts of size 2 in $\pi$ and the sum of all the partial sums of $\pi$ consisting of the parts lying strictly to the left of some part of size 2.  Note that a composition in $\mathcal{B}_n$ may be regarded as a member of $\mathcal{L}_n$ by putting a square for each part 1 and a domino for each 2 and then writing the numbers $1,2,\ldots,n$ from left to right in the $n$ positions covered by the resulting tiling.  Thus, the analogous statistic on $\mathcal{L}_n$ (see \cite{SW}) records the sum of the numbers that are covered by the left halves of the dominoes within a tiling.  An equivalent statistic was considered earlier by Carlitz \cite{Car} recording the value of $a_1+2a_2+\cdots+(n-1)a_{n-1}$ on the set of $\{0,1\}$-words $a_1a_2\cdots a_{n-1}$ in which no two consecutive $1$'s occur.  Finally, the capacity statistic on $\mathcal{B}_n$ is seen to be equivalent to the statistic on $\mathcal{L}_n$ recording the total number of squares occurring between the first and last dominoes. \medskip

Define ${\bf b}_n={\bf b}_n(y;p,q)$ by
$${\bf b}_n=\sum_{\pi \in \mathcal{B}_n}p^{\tau(\pi)}q^{\sigma(\pi)}y^{\text{cap}(\pi)}, \qquad n \geq 1,$$
with ${\bf b}_0=1$.  Note that ${\bf b}_n(y;1,1)=b_n(y)$ for all $n$. For example, if $n=6$, then
$${\bf b}_6(y;p,q)=q^9+p^2q^4(1+q^2+q^4)+p^2q^5y(1+q^2+qy)+p^4q(1+q+q^2+q^3+q^4)+p^6,$$
which reduces to $b_6(y)=10+2y+y^2$ when $p=q=1$.

Define the generating function $F(x)=F(x,y;p,q)$ by
$$F(x)=\sum_{n\geq0}{\bf b}_nx^n.$$

\begin{theorem}\label{F(x,y;p,q)}
We have
\begin{equation}\label{F(x,y;p,q)e1}
F(x,y;p,q)=\frac{1}{1-px}+\sum_{n\geq1}\frac{q^{n^2}x^{2n}}{(1-px)(1-pq^nx)\prod_{j=1}^{n-1}(1-pq^jxy)}.
\end{equation}
\end{theorem}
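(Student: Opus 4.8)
The plan is to sort the compositions in $\mathcal{B}_n$ by the number $m$ of parts equal to $2$ and to evaluate the contribution of each class as a product of geometric series. Write a typical $\pi \in \mathcal{B}_n$ with exactly $m$ twos as $\pi = 1^{c_0}21^{c_1}2\cdots 21^{c_m}$, where $c_0,\ldots,c_m \geq 0$ and $c_0 + \cdots + c_m + 2m = n$, so that the parts equal to $2$ split $\pi$ into $m+1$ (possibly empty) runs of $1$'s; this is clearly a bijection onto the set of such tuples. Then $\tau(\pi) = c_0 + \cdots + c_m$, and, invoking the description of capacity as the number of squares lying strictly between the first and last dominoes --- equivalently, the number of $1$'s of $\pi$ occurring between its first and last $2$ --- we get $\text{cap}(\pi) = c_1 + \cdots + c_{m-1}$, with the convention that this sum is empty (hence $0$) when $m \leq 1$.

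The one computation requiring a little care is that of $\sigma(\pi)$. Since the $j$-th part equal to $2$ (for $1 \leq j \leq m$) is preceded by the prefix $1^{c_0}21^{c_1}\cdots 21^{c_{j-1}}$, whose parts sum to $c_0 + \cdots + c_{j-1} + 2(j-1)$, and since $r = m$ in the definition of $\sigma$, summing over $j$ and interchanging the order of summation gives
\[
\sigma(\pi) = m + \sum_{j=1}^{m}\bigl(c_0 + \cdots + c_{j-1} + 2(j-1)\bigr) = m + 2\binom{m}{2} + \sum_{i=0}^{m-1}(m-i)c_i = m^2 + \sum_{i=0}^{m-1}(m-i)c_i,
\]
using $m + 2\binom{m}{2} = m^2$; note in particular that $c_m$ makes no contribution to $\sigma$.

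Given these formulas, the generating function for the $\pi \in \mathcal{B}_n$ with exactly $m \geq 1$ twos is obtained by summing $p^{\tau(\pi)}q^{\sigma(\pi)}y^{\text{cap}(\pi)}x^{n}$ over all $c_0,\ldots,c_m \geq 0$; the weight factors through the individual $c_i$, so the sum equals
\[
q^{m^2}x^{2m}\Bigl(\sum_{c_0\geq 0}(pq^mx)^{c_0}\Bigr)\Bigl(\prod_{i=1}^{m-1}\sum_{c_i\geq 0}(pq^{m-i}xy)^{c_i}\Bigr)\Bigl(\sum_{c_m\geq 0}(px)^{c_m}\Bigr) = \frac{q^{m^2}x^{2m}}{(1-px)(1-pq^mx)\prod_{i=1}^{m-1}(1-pq^{m-i}xy)}.
\]
Re-indexing the product by $j = m-i$ replaces $\prod_{i=1}^{m-1}(1-pq^{m-i}xy)$ with $\prod_{j=1}^{m-1}(1-pq^jxy)$, and adjoining the $m=0$ term $\sum_{c_0\geq 0}(px)^{c_0} = \frac{1}{1-px}$ then yields \eqref{F(x,y;p,q)e1}. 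The whole argument is essentially bookkeeping once the three statistics are expressed via the $c_i$; the only points to double-check are the edge cases $m=0,1$ (where the relevant products and the capacity sum are empty and must be read as $1$ and $0$ respectively), so I do not foresee a genuine difficulty --- the mild subtlety is simply getting the exponent $m^2$ and the shift in the $q$-powers of the three groups of factors (initial run weighted by $pq^m x$, internal runs by $pq^{m-i}xy$, final run by $px$) right.
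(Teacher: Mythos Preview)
Your argument is correct. The key identities $\tau(\pi)=\sum_i c_i$, $\text{cap}(\pi)=\sum_{i=1}^{m-1}c_i$, and $\sigma(\pi)=m^2+\sum_{i=0}^{m-1}(m-i)c_i$ are all right, the weight factorizes as stated, and the edge cases $m=0,1$ are handled properly. Nothing is missing.

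Your route, however, is genuinely different from the paper's. The paper decomposes by the \emph{last part}: setting ${\bf b}_n^{(i)}$ for the contribution of compositions ending in $i$, it obtains ${\bf b}_n^{(1)}=p{\bf b}_{n-1}$ and a convolution-type recurrence for ${\bf b}_n^{(2)}$, which translates into the $q$-shift functional equation
\[
F^{(2)}(x)=\frac{qx^2}{1-pqx}+\frac{qx^2}{1-pqxy}\,F^{(2)}(qx),
\]
and then iterates this to arrive at the infinite sum. By contrast, you sort by the number $m$ of $2$'s and read off each summand directly as a product of geometric series. Your approach is shorter and more transparent, since the formula drops out without any iteration once the three statistics are expressed in the $c_i$. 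The paper's approach, on the other hand, produces the functional equation above as a byproduct, which is a structural identity of independent interest and a template that adapts readily to variants where a closed-form stratification by $m$ is less immediate.
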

\begin{proof}
Let ${\bf b}_n^{(i)}$ for $i=1,2$ denote the restriction of ${\bf b}_n$ to members of $\mathcal{B}_n$ whose last part is $i$.  Then we have
\begin{align}
{\bf b}_n^{(1)}&=p{\bf b}_{n-1}, \qquad n \geq 1,\label{bn(1)}\\
{\bf b}_n^{(2)}&=p^{n-2}q^{n-1}+\sum_{j=2}^{n-2}(py)^{j-2}q^{n-1}{\bf b}_{n-j}^{(2)}, \qquad n \geq 2,\label{bn(2)}
\end{align}
with ${\bf b}_0=1$, ${\bf b}_1^{(2)}=0$ and ${\bf b}_n={\bf b}_n^{(1)}+{\bf b}_n^{(2)}$ for $n \geq 1$. Let $F^{(i)}(x)=\sum_{n\geq1}{\bf b}_n^{(i)}x^n$ for $i=1,2$. Then \eqref{bn(1)} implies $F^{(1)}(x)=pxF(x)$ and $F(x)=1+F^{(1)}(x)+F^{(2)}(x)$, by the definitions.  Hence, we have
\begin{equation}\label{F(x,y;p,q)e2}
F(x)=\frac{1+F^{(2)}(x)}{1-px}.
\end{equation}

To determine $F^{(2)}(x)$, first note
\begin{align*}
\sum_{n\geq4}x^n\sum_{j=2}^{n-2}(py)^{j-2}q^{n-1}{\bf b}_{n-j}^{(2)}&=\frac{1}{q}\sum_{j\geq2}(py)^{j-2}\sum_{n\geq j+2}{\bf b}_{n-j}^{(2)}(qx)^n=\frac{1}{q}\sum_{j\geq2}(py)^{j-2}\cdot(qx)^jF^{(2)}(qx)\\
&=F^{(2)}(qx)\sum_{j\geq2}p^{j-2}q^{j-1}x^jy^{j-2}=\frac{qx^2}{1-pqxy}F^{(2)}(x).
\end{align*}
Multiplying both sides of \eqref{bn(2)} by $x^n$, and summing over all $n \geq 2$, then gives
\begin{align}
F^{(2)}(x)&=\sum_{n\geq2}p^{n-2}q^{n-1}x^n+\sum_{n\geq4}x^n\sum_{j=2}^{n-2}(py)^{j-2}q^{n-1}{\bf b}_{n-j}^{(2)}\notag\\
&=\frac{qx^2}{1-pqx}+\frac{qx^2}{1-pqxy}F^{(2)}(qx). \label{f(x,y;p,q)funceq}
\end{align}
Iterating the functional equation \eqref{f(x,y;p,q)funceq} an infinite number of times, where $x$ is assumed to be sufficiently small in absolute value, yields
\begin{align*}
F^{(2)}(x)&=\frac{qx^2}{1-pqx}+\frac{qx^2}{1-pqxy}\left(\frac{q^3x^2}{1-pq^2x}+\frac{q^3x^2}{1-pq^2xy}F^{(2)}(q^2x)\right)\\
&=\cdots=\frac{qx^2}{1-pqx}+\sum_{n\geq1}\frac{q^{2n+1}x^2}{1-pq^{n+1}x}\prod_{j=1}^n\left(\frac{q^{2j-1}x^2}{1-pq^jxy}\right)\\
&=\sum_{n\geq0}\frac{q^{(n+1)^2}x^{2n+2}}{(1-pq^{n+1}x)\prod_{j=1}^n(1-pq^jxy)}=\sum_{n\geq1}\frac{q^{n^2}x^{2n}}{(1-pq^{n}x)\prod_{j=1}^{n-1}(1-pq^jxy)},
\end{align*}
from which \eqref{F(x,y;p,q)e1} now follows from \eqref{F(x,y;p,q)e2}.
\end{proof}

Taking $q=1$ in \eqref{F(x,y;p,q)e1} gives the following formula for $F(x,y;p,1)$, which reduces to \eqref{Fxy} when $p=1$.

\begin{corollary}\label{F(x,y;p,q)cor}
We have
\begin{equation}\label{F(x,y;p,q)core1}
F(x,y;p,1)=\frac{1-px(1+y)+p^2x^2y+px^3(1-y)}{(1-px)^2(1-pxy-x^2)}.
\end{equation}
\end{corollary}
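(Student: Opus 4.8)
The plan is to take $q=1$ in the series formula \eqref{F(x,y;p,q)e1} from Theorem~\ref{F(x,y;p,q)} and sum the resulting geometric-type series explicitly. When $q=1$, every factor $q^{n^2}$, $q^n$, $q^j$ collapses to $1$, so the $n$-th summand becomes $\dfrac{x^{2n}}{(1-px)^2(1-pxy)^{n-1}}$ for $n\geq 1$. Thus
\begin{align*}
F(x,y;p,1)&=\frac{1}{1-px}+\frac{1}{(1-px)^2}\sum_{n\geq1}\frac{x^{2n}}{(1-pxy)^{n-1}}\\
&=\frac{1}{1-px}+\frac{x^2}{(1-px)^2}\sum_{n\geq0}\left(\frac{x^2}{1-pxy}\right)^{n}\\
&=\frac{1}{1-px}+\frac{x^2}{(1-px)^2}\cdot\frac{1-pxy}{1-pxy-x^2}.
\end{align*}
First I would record this closed form, valid for $x$ sufficiently small in absolute value so that the geometric series converges.

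Next I would combine the two terms over the common denominator $(1-px)^2(1-pxy-x^2)$. The numerator is $(1-px)(1-pxy-x^2)+x^2(1-pxy)$. Expanding, $(1-px)(1-pxy-x^2)=1-pxy-x^2-px+p^2x^2y+px^3$, and adding $x^2-px^3y$ gives $1-px(1+y)+p^2x^2y+px^3-px^3y=1-px(1+y)+p^2x^2y+px^3(1-y)$, which is exactly the numerator in \eqref{F(x,y;p,q)core1}. This is a short, purely algebraic verification.

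There is essentially no hard step here; the only thing to be slightly careful about is the index bookkeeping when passing from $\sum_{n\geq1}$ with exponent $(1-pxy)^{n-1}$ to a clean geometric series, and the sign handling in the final numerator expansion. Setting $p=1$ in \eqref{F(x,y;p,q)core1} recovers the numerator $1-x(1+y)+x^2y+x^3(1-y)$ and denominator $(1-x)^2(1-xy-x^2)$ of \eqref{Fxy}, confirming consistency. I would close by noting this specialization check.
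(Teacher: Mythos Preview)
Your proof is correct and follows exactly the approach indicated by the paper: set $q=1$ in \eqref{F(x,y;p,q)e1}, which collapses the product $\prod_{j=1}^{n-1}(1-pq^jxy)$ to $(1-pxy)^{n-1}$ and the factor $1-pq^nx$ to $1-px$, then sum the resulting geometric series and simplify. The paper states the corollary without writing out these steps, and your algebra fills them in accurately.
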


Let $\mathcal{B}_{n,k,j}$ denote the subset of $\mathcal{B}_{n,k}$ whose members contain exactly $j$ parts $1$.  Using \eqref{F(x,y;p,q)core1}, one can find an explicit formula for its cardinality.

\begin{theorem}\label{th1ex}
Let $n \geq j\geq k\geq0 $ with $n \equiv j\,(mod~2)$. If $k \geq 1$ and $n \geq j+4$, then
\begin{equation}\label{th1exe1}
|\mathcal{B}_{n,k,j}|=(j-k+1)\binom{\frac{n-j-4}{2}+k}{k}.
\end{equation}
If $n \geq j+2$, then $|\mathcal{B}_{n,0,j}|=j+1$, with $|\mathcal{B}_{n,0,n}|=1$ for all $n \geq0$.
\end{theorem}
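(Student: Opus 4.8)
The plan is to obtain \eqref{th1exe1} by refining the unique decomposition \eqref{bnkform} of members of $\mathcal{B}_{n,k}$ used in the combinatorial proof of Theorem~\ref{th1} so that it also records the number of parts equal to $1$; the formula then drops out of a direct product count. I would also verify the result by reading off $[x^ny^kp^j]$ from the generating function $F(x,y;p,1)$ of Corollary~\ref{F(x,y;p,q)cor}, which gives an independent check and is presumably the route the section header is pointing to.

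For the case $k\ge1$, recall that every $\pi\in\mathcal{B}_{n,k}$ with $n\ge5$ and $1\le k\le n-4$ is uniquely expressible as $\pi=1^x2^{a_0}12^{a_1}1\cdots2^{a_{k-1}}12^{a_k}1^y$ with $a_0,a_k\ge1$, $x,y,a_1,\dots,a_{k-1}\ge0$, and $x+y+2\sum_{i=0}^{k}a_i=n-k$. First I would observe that in this form the number of $1$'s equals $x+y+k$ (the $k$ internal separating $1$'s together with the initial and terminal runs of $1$'s), so imposing $\tau(\pi)=j$ is equivalent to $x+y=j-k$, which admits $j-k+1$ solutions in nonnegative integers and is consistent with the hypothesis $j\ge k$. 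Substituting back forces $\sum_{i=0}^{k}a_i=(n-j)/2$, an integer since $n\equiv j\pmod{2}$; after the shift $a_0\mapsto a_0-1$, $a_k\mapsto a_k-1$ this becomes a count of $k+1$ nonnegative integers summing to $(n-j-4)/2$, of which there are $\binom{(n-j-4)/2+k}{k}$, and the existence of such a solution is exactly the requirement $(n-j)/2\ge2$, i.e.\ $n\ge j+4$. Since the choice of $(x,y)$ and the choice of $(a_0,\dots,a_k)$ are independent, multiplying yields \eqref{th1exe1}.

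For the case $k=0$, I would use the fact that a composition of $1$'s and $2$'s has capacity $0$ precisely when its parts equal to $2$ form a single contiguous block, i.e.\ $\pi=1^x2^m1^y$ for some $x,y\ge0$ and $m\ge0$. If $m=0$ then $\pi=1^n$ is the unique member of $\mathcal{B}_{n,0,n}$, giving $|\mathcal{B}_{n,0,n}|=1$ for all $n\ge0$; if $m\ge1$ then $\tau(\pi)=x+y$ and $2m=n-\tau(\pi)$, so $\tau(\pi)=j$ requires $m=(n-j)/2\ge1$, i.e.\ $n\ge j+2$, with $x+y=j$ admitting $j+1$ solutions. Hence $|\mathcal{B}_{n,0,j}|=j+1$ for $n\ge j+2$.

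For the generating function alternative, setting $q=1$ in \eqref{F(x,y;p,q)e1} collapses the series to $F(x,y;p,1)=\frac{1}{1-px}+\frac{x^2}{(1-px)^2}+\frac{x^4}{(1-px)^2(1-pxy-x^2)}$; the terms of positive degree in $y$ come only from the last summand, and subtracting its $y^0$-part $\frac{x^4}{(1-px)^2(1-x^2)}$ leaves $\frac{px^5y}{(1-px)^2(1-x^2)(1-pxy-x^2)}$, which I would expand via $\frac{x^4}{1-pxy-x^2}-\frac{x^4}{1-x^2}=\sum_{r\ge2}x^{2r}((1-pxy)^{-(r-1)}-1)$ and then multiply by $(1-px)^{-2}$, exactly as in the proof of Theorem~\ref{th1}, to recover \eqref{th1exe1}; the $y^0$-part $F(x,0;p,1)=\frac{1}{1-px}+\frac{x^2}{(1-px)^2}+\frac{x^4}{(1-px)^2(1-x^2)}$, whose three summands have transparent coefficient extractions, yields the $k=0$ statement. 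I do not expect a genuine obstacle here: the only delicate point is the bookkeeping of the boundary cases — reconciling the thresholds $n\ge j+4$ (for $k\ge1$) and $n\ge j+2$ (for $k=0$), keeping the parity hypothesis $n\equiv j\pmod{2}$ in force throughout, and isolating $j=n$ — together with confirming that \eqref{bnkform} is genuinely a bijection so that no element of $\mathcal{B}_{n,k,j}$ is double-counted or omitted.
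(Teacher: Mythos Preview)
Your proposal is correct. Your primary argument is combinatorial --- you refine the decomposition \eqref{bnkform} by tracking the number of $1$'s, which fixes $x+y=j-k$ and $\sum_i a_i=(n-j)/2$ and yields the formula directly as a product count --- whereas the paper gives only the generating-function argument: it rewrites $F(x,y;p,1)$ as
\[
\frac{1}{1-px}+\frac{x^2}{(1-px)^2(1-x^2)}+\frac{px^5y}{(1-px)^2(1-x^2)(1-pxy-x^2)}
\]
and extracts the coefficient of $x^ny^kp^j$ from each of the three summands. Your secondary route is essentially this same extraction, with a slightly different but equivalent three-term splitting obtained by collapsing the series \eqref{F(x,y;p,q)e1} at $q=1$ rather than by manipulating the closed form \eqref{F(x,y;p,q)core1}. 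The paper in fact anticipates your primary approach in the remarks following Theorem~\ref{totcapthgen}, noting that a combinatorial proof of \eqref{th1exe1} is available by modifying the argument for \eqref{wnkform}, but does not carry it out. Your combinatorial argument is more self-contained and makes the boundary conditions ($n\ge j+4$, $j\ge k$, $n\equiv j\pmod 2$) immediately visible; the paper's generating-function route leverages machinery already in place and ties the three parts of the statement cleanly to the three summands of the decomposition.
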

\begin{proof}
One can give an algebraic proof as follows.  By \eqref{F(x,y;p,q)core1}, we have
$$F(x,y;p,1)=\frac{1}{1-px}+\frac{x^2}{(1-px)^2(1-x^2)}+\frac{px^5y}{(1-px)^2(1-x^2)(1-pxy-x^2)}.$$
Now observe
\begin{align*}
\frac{px^5y}{(1-px)^2(1-x^2)(1-pxy-x^2)}&=\frac{x^4}{(1-px)^2}\sum_{k\geq1}\frac{(pxy)^k}{(1-x^2)^{k+1}}\\
&=x^4\sum_{k\geq1}\frac{(pxy)^k}{(1-x^2)^{k+1}}\sum_{j\geq 1}j(px)^{j-1}\\
&=\sum_{k\geq1}\sum_{j \geq k}(j-k+1)x^{j+4}y^kp^j\cdot \frac{1}{(1-x^2)^{k+1}}\\
&=\sum_{r\geq0}\sum_{k\geq 1}\sum_{j \geq k}(j-k+1)\binom{r+k}{k}x^{2r+j+4}y^kp^j.
\end{align*}
Letting $n=2r+j+4$, and extracting the coefficient of $x^ny^kp^j$ in the last expression, yields \eqref{th1exe1}.  The $\frac{1}{1-px}$ term accounts for the $n=j\geq0,\,k=0$ case, whereas writing
$$\frac{x^2}{(1-px)^2(1-x^2)}=\frac{x^2}{1-x^2}\sum_{j\geq0}(j+1)(px)^j=\sum_{r\geq1}\sum_{j\geq0}(j+1)x^{2r+j}p^j$$
implies $|\mathcal{B}_{n,0,j}|=j+1$ for $n \geq j+2$.
\end{proof}

Let $F_n=F_n(p)$ denote the $n$-th Fibonacci polynomial defined by $F_n=pF_{n-1}+F_{n-2}$ for $n \geq 2$, with $F_0=1$ and $F_1=p$.  Note that $F_n(p)$ when $p$ is a positive integer is seen to enumerate colored members of $\mathcal{B}_n$ wherein each part of size $1$ is assigned one of $p$ colors. Theorem \ref{totcapth} can be generalized in terms of  colored compositions as follows.

\begin{theorem}\label{totcapthgen}
The total capacity of all members of $\mathcal{B}_n$ for $n \geq0$ in which each 1 is assigned one of $p$ colors is given by
\begin{equation}\label{totcapthgene1}
[x^n]\frac{\partial }{\partial y}F(x,y;p,1)\mid_{y=1}=\frac{p^2nF_n+2p(n+1)F_{n-1}}{p^2+4}-2pF_{n+1}+p^{n}(n+2p^2).
\end{equation}
\end{theorem}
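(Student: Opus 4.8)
The plan is to proceed algebraically, mirroring the proof of Theorem \ref{totcapth} but with $F(x,y;p,1)$ from Corollary \ref{F(x,y;p,q)cor} in place of $F(x,y)=F(x,y;1,1)$ and with Fibonacci polynomials in place of Fibonacci numbers. First I would differentiate \eqref{F(x,y;p,q)core1} with respect to $y$ and set $y=1$. Since the numerator of $F(x,y;p,1)$ collapses to $(1-px)^2$ when $y=1$ (which in particular recovers $F(x,1;p,1)=\frac{1}{1-px-x^2}=\sum_{n\ge0}F_nx^n$), the quotient rule together with the identity $(1-px+x^2)(1-px-x^2)-(1-px)^2=-x^4$ yields, after cancellation,
$$\frac{\partial}{\partial y}F(x,y;p,1)\big|_{y=1}=\frac{px^5}{(1-px)^2(1-px-x^2)^2},$$
the exact $p$-analogue of the generating function $\frac{x^5}{(1-x)^2(1-x-x^2)^2}$ obtained in the proof of Theorem \ref{totcapth}.

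Next I would extract $[x^n]$ from the right-hand side. Writing $1-px-x^2=(1-px)-x^2$ gives $\frac{1}{(1-px)(1-px-x^2)}=\frac{1}{x^2}\big(\frac{1}{1-px-x^2}-\frac{1}{1-px}\big)$, whence $[x^m]\frac{1}{(1-px)(1-px-x^2)}=F_{m+2}-p^{m+2}$ for $m\ge0$, the $p$-analogue of $\sum_{i=0}^m f_i=f_{m+2}-1$. Squaring this (that is, convolving it with itself) and using that the summand $(F_{j+2}-p^{j+2})(F_{m-j+2}-p^{m-j+2})$ vanishes at $j=-2,-1,m+1,m+2$, so the summation range may be widened from $[0,m]$ to $[-2,m+2]$ and then reindexed, one obtains
$$[x^m]\frac{1}{(1-px)^2(1-px-x^2)^2}=\sum_{j=0}^{m+4}(F_j-p^j)(F_{m-j+4}-p^{m-j+4}),$$
which splits into a Fibonacci-polynomial self-convolution $\sum_{j=0}^{m+4}F_jF_{m-j+4}$, a mixed term $-2\sum_{j=0}^{m+4}p^jF_{m-j+4}=-2(F_{m+6}-p^{m+6})$, and a geometric term $(m+5)p^{m+4}$.

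The single genuinely new ingredient — and the step I expect to be the main obstacle — is a closed form for $\sum_{j=0}^mF_jF_{m-j}$, playing the role of Benjamin--Quinn's Identity~58 from the $p=1$ case. Introducing Lucas polynomials $L_n=L_n(p)$ by $L_0=2$, $L_1=p$, $L_n=pL_{n-1}+L_{n-2}$, together with the Binet-type forms $F_n=\frac{\alpha^{n+1}-\beta^{n+1}}{\alpha-\beta}$ and $L_n=\alpha^n+\beta^n$, where $\alpha+\beta=p$ and $\alpha\beta=-1$ (so $(\alpha-\beta)^2=p^2+4$), a short computation using $\sum_{j=0}^m\alpha^{j+1}\beta^{m-j+1}=-F_m$ gives
$$\sum_{j=0}^mF_jF_{m-j}=\frac{(m+1)L_{m+2}+2F_m}{p^2+4}.$$
Substituting this into the expression above, then setting $m=n-5$ (the cases $0\le n\le4$ being checked directly, where both sides equal $0$), produces
$$[x^n]\frac{\partial}{\partial y}F(x,y;p,1)\big|_{y=1}=\frac{p(nL_{n+1}+2F_{n-1})}{p^2+4}-2pF_{n+1}+2p^{n+2}+np^n.$$
Finally, converting Lucas to Fibonacci polynomials via $L_{n+1}=F_{n+1}+F_{n-1}=pF_n+2F_{n-1}$ turns $p(nL_{n+1}+2F_{n-1})$ into $p^2nF_n+2p(n+1)F_{n-1}$ and $2p^{n+2}+np^n$ into $p^n(n+2p^2)$, yielding exactly \eqref{totcapthgene1}. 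One could instead give a combinatorial proof in the spirit of the one for Theorem \ref{totcapth}: mark, in a $p$-colored member of $\mathcal{B}_n$, a single $1$ lying strictly between its first and last $2$, and carry the weight $p^{\tau(\pi)}$ through the corresponding analogues of the auxiliary sets $\mathcal{B}_n^*$ and $\mathcal{B}_n'$; the bookkeeping is more delicate but entirely parallel.
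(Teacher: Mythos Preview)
Your proof is correct and follows essentially the same route as the paper: differentiate \eqref{F(x,y;p,q)core1} to obtain $\frac{px^5}{(1-px)^2(1-px-x^2)^2}$, use $[x^m]\frac{1}{(1-px)(1-px-x^2)}=F_{m+2}-p^{m+2}$, convolve and widen the summation range exactly as in the derivation of \eqref{n+5tot}, and then invoke the Fibonacci-polynomial self-convolution identity (your Lucas form $\frac{(m+1)L_{m+2}+2F_m}{p^2+4}$ agrees with the paper's \eqref{Fibpolconv} via $L_{m+2}=pF_{m+1}+2F_m$). The only cosmetic difference is that you derive \eqref{Fibpolconv} via Binet forms whereas the paper appeals to matching generating functions.
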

\begin{proof}
By \eqref{F(x,y;p,q)core1}, we have
$$\frac{\partial }{\partial y}F(x,y;p,1)\mid_{y=1}=\frac{px}{(1-px-x^2)^2}-\frac{px(1-px+x^2)}{(1-px)^2(1-px-x^2)}=\frac{px^5}{(1-px)^2(1-px-x^2)^2}.$$
Generalizing the argument given above for \eqref{n+5tot}, and using the fact $\sum_{i=0}^np^{n-i}F_i=F_{n+2}-p^{n+2}$, we have
\begin{equation}\label{x^ncoeff}
[x^n]\left(\frac{1}{(1-px)^2(1-px-x^2)^2}\right)=\sum_{i=0}^{n+4}F_iF_{n-i+4}-2F_{n+6}+p^{n+4}(n+5+2p^2), \quad n \geq0.
\end{equation}
Note that the right side of \eqref{x^ncoeff} is seen to give the correct value of zero for $-5 \leq n \leq -1$.  Upon comparing the generating functions of both sides, we have
\begin{equation}\label{Fibpolconv}
\sum_{i=0}^{n-1}F_iF_{n-i-1}=\frac{pnF_n+2(n+1)F_{n-1}}{p^2+4}, \qquad n \geq0,
\end{equation}
where $F_{-1}=0$. Formula \eqref{totcapthgene1} now follows from computing $p[x^{n-5}]\left(\frac{1}{(1-px)^2(1-px-x^2)^2}\right)$ for $n \geq0$ using \eqref{x^ncoeff} and \eqref{Fibpolconv}.
\end{proof}

\noindent \textbf{Remarks:} Rewriting formula \eqref{th1exe1} using $r=\frac{n-j-2}{2}$, where $1 \leq r \leq \lfloor(n-k-2)/2\rfloor$, and then summing over $r$ yields \eqref{wnkform}.  Taking $p=1$ in \eqref{totcapthgene1}, and noting $L_{n+1}=F_n+2F_{n-1}$, yields \eqref{totcap}. In addition, it is possible to provide combinatorial arguments of \eqref{th1exe1} and \eqref{totcapthgene1} by modifying appropriately those given above for \eqref{wnkform} and \eqref{totcap}, respectively.  Moreover, one can explain combinatorially the Fibonacci polynomial identity \eqref{Fibpolconv} used in obtaining Theorem \ref{totcapthgen} by extending the tiling argument given in \cite[Identity~58]{BQ}. \medskip


\begin{thebibliography}{99}

\bibitem{BQ}
A. T. Benjamin and J. J. Quinn, \emph{Proofs that Really Count: The Art of Combinatorial Proof}, Mathematical Association of America, Washington, DC, 2003.

\bibitem{BBK1}
A. Blecher, C. Brennan and A. Knopfmacher, Capacity of words, \emph{J. Combin. Math. Combin. Comput.} {\bf107} (2018), 249--258.

\bibitem{BBK2}
A. Blecher, C. Brennan and A. Knopfmacher, The water capacity of integer compositions, \emph{Online J. Anal. Comb.} {\bf13} (2018), Art.\,6.

\bibitem{BBKS}
A. Blecher, C. Brennan, A. Knopfmacher and M. Shattuck, Water capacity of permutations, \emph{Ann. Math. Inform.} {\bf 50} (2019), 39--56.

\bibitem{Car}
L. Carlitz, Fibonacci notes 3: $q$-Fibonacci numbers, \emph{Fibonacci Quart.} {\bf 12} (1974), 317--322.

\bibitem{HT}
B. Hopkins and A. Tangboonduangjit, Water cells in compositions of $1$'s and $2$'s, arXiv:2412.11528v1 [math.CO], preprint, to appear in \emph{Fibonacci Quart.}, 2025.

\bibitem{MS}
T. Mansour and M. Shattuck, Counting water cells in bargraphs of compositions and set partitions, \emph{Appl. Anal. Discrete Math.} {\bf12} (2018), 413--438.

\bibitem{SW}
M. A. Shattuck and C. G. Wagner,  Parity theorems for statistics on domino arrangements, \emph{Electron. J. Combin.} {\bf12} (2005), \#N10.

\bibitem{Sloane}
N. J. A. Sloane et al., The On-Line Encyclopedia of Integer Sequences, 2020. Available at https://oeis.org.

\bibitem{Stan}
R. P. Stanley, \emph{Enumerative Combinatorics, Vol.~I}, Cambridge University Press, Cambridge, UK, 1997.

\end{thebibliography}
\end{document}